\documentclass[12pt]{article}
\usepackage{times}
\usepackage{amsmath,amssymb}
\usepackage{theoremref}
\usepackage[utf8]{inputenc}
\usepackage{indentfirst}
\usepackage{graphicx,tikz}
\usepackage{xcolor}
\usepackage{xskak}
\usepackage[T1]{fontenc}
\usepackage{amsthm}
\usepackage{indentfirst}

\usepackage{parskip}
\newtheorem{problem}{Question}

\newtheorem{lemma}[problem]{Lemma}

\title{Three-Layer Intersecting Temperate Families}
\author{Kada Williams}

\begin{document}

\maketitle

\begin{abstract}
In response to a conjecture of Petr and Turek, we prove that every largest intersecting temperate collection of sets $\mathcal{A}\subseteq [2k]^{(k)}\cup [2k]^{(k+1)}\cup [2k]^{(k+2)}$ includes all $k$-sets containing some given $a\in[2k]$, all $(k+1)$-sets, and all $(k+2)$-sets not containing $a$.
\end{abstract}

\pagenumbering{arabic}
\setlength{\baselineskip}{18pt}

\section{Introduction}

Let $[n]=\{1,2,\ldots,n\}$. A collection $\mathcal{A}$ of subsets of $[n]$ is called {\it temperate} if for all $A\in\mathcal{A}$, there are at most $|A|$ proper subsets of $A$ in $\mathcal{A}$. It is intersecting if for all $A,B\in \mathcal{A}$, the intersection $A\cap B$ is nonempty.

The notion of temperate collection is distantly related to the Sunflower Conjecture. This conjecture states that for every $r\ge 3$, if $\mathcal{C}$ is an $r$-sunflower-free collection of at-most-$m$-element sets, then $|\mathcal{C}|$ is at most exponential in $m$. Here, an {\it $r$-sunflower} is a collection of $r$ sets that contain every element $0$, $1$, or $r$ times \cite{PeT}. Now if $\mathcal{C}$ does not contain an {\it odd sunflower}, a collection whose sets contain every element $0$ or an odd number of times, then $\mathcal{C}$ is $3$-sunflower-free and also temperate. This holds since for all $A\in \mathcal{C}$, at most $|A|$ vectors can be linearly independent in $\mathbb{F}_2^A$.

Petr and Turek asked how large an intersecting temperate collection of subsets of $[n]$ can be. They conjectured the maximum is $\binom{2k-1}{k}+\binom{2k-1}{k+1}$ if $n=2k-1$ and $\frac12\binom{2k}{k}+\binom{2k}{k+1}+\binom{2k-1}{k+2}$ if $n=2k$ ($k\ge 2$) \cite{PeT}. We partially resolve this conjecture.

\section{Complementary Pairs}

The following result was known previously \cite{PeT}. For completeness, we prove it.

\begin{lemma}\label{chains}
    Let $\mathcal{A}$ be a temperate collection of subsets of $[n]$ with $\emptyset\notin\mathcal{A}$. Then
    $$\sum_{A\in \mathcal{A}}\frac1{\binom n{|A|}}\le 2$$
\end{lemma}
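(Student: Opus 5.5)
We follow the LYM-style double counting over maximal chains. Let $\sigma$ range over the $n!$ permutations of $[n]$, and associate to $\sigma$ the maximal chain
$$\emptyset=C_0\subset C_1\subset\cdots\subset C_n=[n],\qquad C_i=\{\sigma(1),\ldots,\sigma(i)\}.$$
A set $A$ with $|A|=a$ lies on exactly $a!(n-a)!$ of these chains. Hence
$$\frac{1}{n!}\sum_\sigma |\mathcal{A}\cap\{C_0,\ldots,C_n\}|=\sum_{A\in\mathcal{A}}\frac{1}{\binom n{|A|}}.$$
So it suffices to show that the \emph{average} number of members of $\mathcal{A}$ on a random maximal chain is at most $2$. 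A single chain can certainly contain more than two members, since a chain of $|A|$ proper subsets of $A$ is allowed. The bound must therefore come from averaging, not from a pointwise bound.

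To get the averaging, fix $\sigma$ and let $A=C_j$ be the \emph{largest} member of $\mathcal{A}$ on the chain. Instead of counting directly, we redistribute weight. Each $B\in\mathcal{A}$ is counted with weight $1/\binom n{|B|}$. We bound the total weight of the members of $\mathcal{A}$ below a fixed $A$ in terms of the weight of $A$. Concretely, we aim to prove a local inequality: for each $A\in\mathcal{A}$ with $|A|=a$,
$$\sum_{\substack{B\in\mathcal{A}\\ B\subsetneq A}} \Pr[\text{chain passes through }B\mid \text{chain passes through }A]\le 1 .$$
Given that the chain passes through $A$, it passes through $B\subseteq A$ with probability $1/\binom{a}{|B|}$. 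By temperance $A$ has at most $a$ proper subsets in $\mathcal{A}$, none of them empty. Each such $B$ contributes at most $1/\binom a{|B|}\le 1/a$, because $1\le|B|\le a-1$. (When $a=1$ there are no nonempty proper subsets at all.) The sum is therefore at most $a\cdot\frac1a=1$. This is the key use of temperance together with $\emptyset\notin\mathcal{A}$.

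Now we conclude. For each chain, the members of $\mathcal{A}$ on it are its top member $A$ (if any) and members strictly below $A$. So the expected count is at most
$$\Pr[\text{some member on chain}]+\sum_{A\in\mathcal{A}}\Pr[\text{$A$ is top on chain}]\cdot\mathbb{E}[\#\text{members below }A\mid A\text{ on chain, top}].$$
The obstacle is that conditioning on $A$ being the top member could bias which subsets of $A$ appear. It does not. Being the top member is an event about the part of the chain above $A$, namely the order of $[n]\setminus A$. The part of the chain below $A$ is an independent uniform ordering of $A$. So the conditional expectation of the number of members below $A$ equals the unconditional one given that the chain passes through $A$, and this is at most $1$ by the local inequality. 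Summing over $A$, the second term is at most $\Pr[\text{some member on chain}]\le1$, and the total is at most $2$. Double-checking this independence claim is where I expect the care to be needed.
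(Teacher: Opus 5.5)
Your proof is correct and follows the same route as the paper: a random maximal chain, the conditional bound $\binom{a}{|B|}^{-1}\le 1/a$ summed over the at most $a$ proper subsets allowed by temperance, and a partition of the probability space according to the largest member of $\mathcal{A}$ on the chain. Your check that the event ``$A$ is the top member'' depends only on the ordering of $[n]\setminus A$, and is therefore independent of the ordering inside $A$, is exactly the step the paper leaves implicit when it partitions by the largest set.
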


\begin{proof} Consider the chain
$$\mathcal{L}=\{\emptyset,\{a_1\},\{a_1,a_2\},\ldots,\{a_1,\ldots,a_n\}\},$$ 
where $a_1,\ldots,a_n$ is a uniformly random permutation of $[n]$. Then, if $A\subsetneq M$ are sets in $\mathcal{A}$, the chance that $A\in \mathcal{L}$ conditional on $M\in \mathcal{L}$ is $\binom{|M|}{|A|}^{-1}\le \frac1{|M|}$. 

Since $\mathcal{A}$ is temperate, there are at most $|M|$ such sets $A$, so their expected number is at most $1$. By partitioning the probability space according to the largest set $M\in\mathcal{A}$ that appears in $\mathcal{L}$ (if exists), it follows that $\mathbb{E}|\mathcal{A}\cap \mathcal{L}|\le 2$.

Since the chance of $A\in \mathcal{L}$ is $\binom{n}{|A|}^{-1}$, this proves the result.
\end{proof}

A \textit{complementary pair} in $[n]$ is $\{A,B\}$, where $A\cap B=\emptyset$ and $A\cup B=[n]$. If $\mathcal{A}$ is intersecting, then it contains no complementary pair as a subset.

Denote $[n]^{(i)}=\{A\subseteq [n]:|A|=i\}$. Now $[n]^{(i)}\cup [n]^{(n-i)}$ contains the complement of each set included, so it can be partitioned into complementary pairs. Hence, an intersecting collection of sets contains at most half the sets.

Let $\mathcal{A}$ be an intersecting temperate collection of subsets of $[n]$. For $n=2k-1$, $\mathcal{A}$ contains at most half the sets in $[2k-1]^{(k)}\cup [2k-1]^{(k-1)}$ and at most half the sets in $[2k-1]^{(k+1)}\cup [2k-1]^{(k-2)}$. This condition suffices to deduce from Lemma \ref{chains} that $[2k-1]^{(k)}\cup [2k-1]^{(k+1)}$ is the largest intersecting temperate collection \cite{PeT}. For $n=2k$, $\mathcal{A}$ contains no complementary pair, and we will show that if $\mathcal{A}\subseteq [2k]^{(k)}\cup [2k]^{(k+1)}\cup [2k]^{(k+2)}$, the largest possible temperate $\mathcal{A}$ is a \textit{lightning}.

\section{Reducing the Conditions}

Let us bear in mind our optimal example, the so-called lightning \cite{PeT}. For some $a\in [2k]$, it includes every $k$-set containing $a$, every $(k+1)$-set, and every $(k+2)$-set not containing $a$. Although there are other optimal examples in general \cite{PeT}, we will show this is the only possible maximiser if $\mathcal{A}\subseteq [2k]^{(k)}\cup [2k]^{(k+1)}\cup [2k]^{(k+2)}$. 

Let $\mathcal{A}\subseteq [2k]^{(k)}\cup [2k]^{(k+1)}\cup [2k]^{(k+2)}$ be a temperate family that contains at most half the sets in $[2k]^{(k)}$. Each $(k+2)$-set contains $k+2$ possible $(k+1)$-sets, and so every $A\in \mathcal{A}\cap [2k]^{(k+2)}$ has at most as many $k$-subsets in $\mathcal{A}$ as many $(k+1)$-subsets it has not in $A$. Let's add up these bounds!
$$\sum_{A\in \mathcal{A},|A|=k+2}m_A\le \sum_{A\in \mathcal{A},|A|=k+2}\quad \sum_{B\notin\mathcal{A},|B|=k+1} 1_{B\subset A},$$
where $m_A$ is the number of subsets $C\in\mathcal{A}$ of $A$ with $|C|=k$. Observe that if $|B|=k+1$, at most $k-1$ $(k+2)$-sets $A$ contain $B$. Exchanging the summation:
$$\sum_{A\in\mathcal{A},|A|=k+2}m_A\le (k-1)\big|[2k]^{(k+1)}\setminus \mathcal{A}\big|.$$
It suffices to prove that $|[2k]^{(k+1)}\setminus \mathcal{A}|\ge |\mathcal{A}\cap [2k]^{(k+2)}|-\binom{2k-1}{k+2}$. Hence, we show
$$\frac1{k-1}\sum_{A\in\mathcal{A},|A|=k+2}m_A\ge \big|\mathcal{A}\cap [2k]^{(k+2)}\big|-\binom{2k-1}{k+2},$$
$$\sum_{A\in\mathcal{A},|A|=k+2}\left(1-\frac{m_A}{k-1}\right)\le \binom{2k-1}{k+2}.$$
In the case of a lightning, $m_A=0$ for all $A\in \mathcal{A}$ of size $k+2$. However, if $A\notin \mathcal{A}$, we observe $m_A=\binom{k+1}{2}$. Hence, it makes good sense to apply the bound $(1-x)\le (1-\frac{x}{y})^2$ with $y=\frac1{k-1}\binom{k+1}{2}\ge 2$ and $x=\frac{d_A}{k-1}$, reducing the claim to
$$\sum_{|A|=k+2}\left(1-\frac{m_A}{\binom{k+1}{2}}\right)^2\le \binom{2k-1}{k+2}.$$

\section{Spectral Decomposition of Hypercube Layers}

Let $V_k$ be a $\mathbb{Q}$-vector space of dimension $\binom nk$ whose orthonormal basis vectors $\mathbf{e}_A^{(k)}$ are indexed by the sets $A\in [n]^{(k)}$. Let $i_k:V_k\to V_k$ be the identity map.

Let $u_k:V_{k-1}\to V_k$ be the linear map defined by $\mathbf{e}_A^{(k-1)}\mapsto \sum_{A\subset B}\mathbf{e}_B^{(k)}$.

Let $d_k:V_k\to V_{k-1}$ be the linear map defined by $\mathbf{e}_A^{(k)}\mapsto \sum_{B\subset A}\mathbf{e}_B^{(k-1)}$.

Clearly, the matrix $D_k$ of $d_k$ is the transpose of the matrix $U_k$ of $u_k$. So if $\mathbf{x}\in V_k$, 
$$\langle d_k\mathbf{x},d_k\mathbf{x}\rangle=\mathbf{x}^TD_k^TD_k\mathbf{x}=\mathbf{x}^TU_kD_k\mathbf{x}=\langle \mathbf{x},u_kd_k\mathbf{x}\rangle.$$

\begin{lemma} \label{harmonic}
    $d_{k+1}\circ u_{k+1}-u_k\circ d_k=(n-2k)i_k$
\end{lemma}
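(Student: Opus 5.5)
Since both sides are linear maps $V_k\to V_k$, it suffices to check the identity on each basis vector $\mathbf{e}_A^{(k)}$ with $A\in[n]^{(k)}$. We expand both compositions in the standard basis and compare coefficients of each $\mathbf{e}_C^{(k)}$.

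First, $u_{k+1}\mathbf{e}_A^{(k)}=\sum_{B\supset A,|B|=k+1}\mathbf{e}_B^{(k+1)}$. Applying $d_{k+1}$ gives
$$d_{k+1}u_{k+1}\mathbf{e}_A^{(k)}=\sum_{|B|=k+1,\,B\supset A}\ \sum_{C\subset B,\,|C|=k}\mathbf{e}_C^{(k)}.$$
So the coefficient of $\mathbf{e}_C^{(k)}$ is the number of $(k+1)$-sets $B$ containing both $A$ and $C$.
\begin{itemize}
\item If $C=A$, this is the number of ways to add one element outside $A$, namely $n-k$.
\item If $|A\cap C|=k-1$, then $B$ is forced to be $A\cup C$, so the coefficient is $1$.
\item If $|A\cap C|<k-1$, then $|A\cup C|>k+1$ and the coefficient is $0$.
\end{itemize}

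Similarly,
$$u_kd_k\mathbf{e}_A^{(k)}=\sum_{|B|=k-1,\,B\subset A}\ \sum_{C\supset B,\,|C|=k}\mathbf{e}_C^{(k)}.$$
Here the coefficient of $\mathbf{e}_C^{(k)}$ is the number of $(k-1)$-sets $B$ contained in both $A$ and $C$.
\begin{itemize}
\item If $C=A$, it is $k$, since we remove one element of $A$.
\item If $|A\cap C|=k-1$, then $B$ must equal $A\cap C$, giving $1$.
\item Otherwise it is $0$.
\end{itemize}

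Subtracting, the off-diagonal coefficients cancel exactly. The case $|A\cap C|=k-1$ gives $1-1=0$, and the remaining off-diagonal cases give $0-0$. The diagonal coefficient is $(n-k)-k=n-2k$. Hence
$$(d_{k+1}u_{k+1}-u_kd_k)\mathbf{e}_A^{(k)}=(n-2k)\mathbf{e}_A^{(k)}$$
for every $A$, and linearity yields the lemma.

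There is no real obstacle; the only care needed is the bookkeeping showing that the neighbouring-set contributions, where $|A\triangle C|=2$, appear with coefficient exactly $1$ in both compositions. This holds because $A\cup C$ is the unique common superset of size $k+1$ and $A\cap C$ is the unique common subset of size $k-1$. The boundary cases $k=0$ or $k=n$ need a remark: there we interpret $V_{-1}$ or $V_{n+1}$ as the zero space, and the same count still gives the diagonal values $n$ and $-n$ respectively.
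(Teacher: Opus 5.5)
Your proof is correct and follows essentially the same route as the paper: both verify the identity on each basis vector $\mathbf{e}_A^{(k)}$, noting that $d_{k+1}u_{k+1}$ and $u_kd_k$ give diagonal coefficients $n-k$ and $k$, and the same coefficient $1$ on each $k$-set $C$ with $|A\cap C|=k-1$. Your version additionally justifies the coefficient $1$ via uniqueness of $A\cup C$ and $A\cap C$, and handles the boundary cases $k=0,n$, both of which the paper leaves implicit.
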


\begin{proof}
We compare $u_k\circ d_k$ with $d_{k+1}\circ u_{k+1}$. When mapping $\mathbf{e}_A^{(k)}$, the former outputs $k\mathbf{e}_A^{(k)}$ plus $\mathbf{e}_B^{(k)}$ for each set $B$ that differs from $A$ by one element, while the latter outputs $(n-k)\mathbf{e}_A^{(k)}$ plus the same. Subtracting yields the result.
\end{proof}

Letting $u$ (or $d$, $i$) be the union of ordered pairs in $u_k$ (or $d_k$, $i_k$) over $1\le k\le n$, it follows that for all $\mathbf{x}\in V_k$,
$$\langle u\mathbf{x},u\mathbf{x}\rangle -\langle d\mathbf{x},d\mathbf{x}\rangle=(n-2k)\langle \mathbf{x},\mathbf{x}\rangle.$$
Therefore, if $1\le j\le \frac n2$, then $d_j$ is surjective, i.e. the rank of $D_j$ is $\binom n{j-1}$, or $u_j$ is injective. Indeed, if $u_j\mathbf{x}=0$ and $\mathbf{x}\neq 0$, then absurdly, $\langle d\mathbf{x},d\mathbf{x}\rangle <0$. Let $W_j\le V_j$ be its kernel, with dimension $\binom nj-\binom{n}{j-1}$ by rank-nullity, and $W_0=V_0$.

\begin{lemma} \label{commute}
    Let $\mathbf{x}\in W_j$ and $1\le l\le n-j$. Then
    $$du^l\mathbf{x}=l(n-2j-l+1)u^{l-1}\mathbf{x}.$$
\end{lemma}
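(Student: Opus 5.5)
We argue by induction on $l$, using Lemma \ref{harmonic} as the commutation rule that lets us push $d$ past one copy of $u$ at a time. Note the precise reading: $u^{l-1}\mathbf{x}\in V_{j+l-1}$ and $u^l\mathbf{x}\in V_{j+l}$, so $d$ acts here as $d_{j+l}$. In general, Lemma \ref{harmonic} applied at level $m$ says $d_{m+1}u_{m+1}\mathbf{y}=u_md_m\mathbf{y}+(n-2m)\mathbf{y}$ for every $\mathbf{y}\in V_m$. At level $m=0$ we interpret $u_0d_0$ as zero, which is consistent with $W_0=V_0$.

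\emph{Base case} $l=1$. Since $\mathbf{x}\in W_j$ we have $d\mathbf{x}=0$. Taking $m=j$ in Lemma \ref{harmonic} gives $du\mathbf{x}=ud\mathbf{x}+(n-2j)\mathbf{x}=(n-2j)\mathbf{x}$. This matches $l(n-2j-l+1)u^{l-1}\mathbf{x}$ at $l=1$.

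\emph{Inductive step.} Suppose the identity holds for $l-1$, where $l\ge 2$. Apply Lemma \ref{harmonic} with $m=j+l-1$ and $\mathbf{y}=u^{l-1}\mathbf{x}$:
$$du^l\mathbf{x}=u\,(du^{l-1}\mathbf{x})+(n-2(j+l-1))u^{l-1}\mathbf{x}.$$
By the induction hypothesis, $du^{l-1}\mathbf{x}=(l-1)(n-2j-l+2)u^{l-2}\mathbf{x}$. Applying $u$ to this gives $(l-1)(n-2j-l+2)u^{l-1}\mathbf{x}$. It remains to check the scalar identity
$$(l-1)(n-2j-l+2)+(n-2j-2l+2)=l(n-2j-l+1).$$
Write $N=n-2j$. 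The left side is $(l-1)(N-l+2)+N-2l+2=lN-l^2+2l-N+l-2+N-2l+2=lN-l^2+l$. This equals $l(N-l+1)$, as required.

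No step here is a real obstacle; the only care needed is bookkeeping. First, one must make sure each application of Lemma \ref{harmonic} happens at the correct level, namely $m=j+l-1$, which requires $1\le m+1\le n$; this holds because $l\le n-j$. Second, the base case has to use the hypothesis $\mathbf{x}\in W_j$, i.e.\ $d\mathbf{x}=0$, with the degenerate case $j=0$ handled by the convention that $d$ kills $V_0$.
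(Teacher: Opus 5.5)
Your proof is correct and follows the paper's argument: induction on $l$, using Lemma~\ref{harmonic} to move $d$ past one copy of $u$, with the base case coming from $d\mathbf{x}=0$ and the inductive step reducing to the same scalar identity (you step from $l-1$ to $l$ where the paper steps from $l$ to $l+1$). Your checks that each use of Lemma~\ref{harmonic} is at level $m=j+l-1\le n-1$, and that $d$ is taken to be zero on $V_0$, make explicit points the paper leaves implicit.
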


\begin{proof}
    We induct, using Lemma \ref{harmonic}. For $l=1$, use $d\mathbf{x}=0$. For the induction step:
    $$du^{l+1}\mathbf{x}=(du)u^l\mathbf{x}=(ud+(n-2j-2l)i)u^l\mathbf{x}$$
    and $l(n-2j)-l(l-1)+(n-2j-2l)=(l+1)(n-2j)-(l+1)l$.
\end{proof}

\begin{lemma}
    Let $k\le \frac n2$. Then $V_k=\bigoplus_{j=0}^k u^{k-j}W_j$ is an orthogonal decomposition.
\end{lemma}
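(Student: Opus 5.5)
We prove three things: the subspaces $u^{k-j}W_j$ ($0\le j\le k$) are pairwise orthogonal, each has dimension $\dim W_j$, and these dimensions add up to $\binom nk$. Together these give the direct sum decomposition.

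\emph{Orthogonality.} Take $\mathbf{x}\in W_i$ and $\mathbf{y}\in W_j$ with $i<j\le k$. Since $D^T=U$ layer by layer, we can move each $u$ across the inner product as a $d$:
$$\langle u^{k-i}\mathbf{x},u^{k-j}\mathbf{y}\rangle=\langle u^{k-i-1}\mathbf{x},\,d\,u^{k-j}\mathbf{y}\rangle .$$
By Lemma \ref{commute}, $d\,u^{k-j}\mathbf{y}$ is a scalar multiple of $u^{k-j-1}\mathbf{y}$. Repeating this $k-j$ times reduces the expression to a scalar multiple of $\langle u^{j-i}\mathbf{x},\mathbf{y}\rangle$. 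Now move the one remaining $u$ the other way:
$$\langle u^{j-i}\mathbf{x},\mathbf{y}\rangle=\langle u^{j-i-1}\mathbf{x},d\mathbf{y}\rangle=0,$$
because $d\mathbf{y}=0$ for $\mathbf{y}\in W_j$ with $j\ge 1$. The case $j=0$ does not arise, since $i<j$.

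\emph{Injectivity of $u^{k-j}$ on $W_j$.} For $\mathbf{x}\in W_j$ and $1\le l\le k-j$, Lemma \ref{commute} gives
$$\|u^l\mathbf{x}\|^2=\langle u^{l-1}\mathbf{x},du^l\mathbf{x}\rangle=l(n-2j-l+1)\|u^{l-1}\mathbf{x}\|^2 .$$
The factor $n-2j-l+1$ is positive: from $j+l\le k\le n/2$ we get $2j+l\le 2k-l\le n-l<n+1$. Induction on $l$ then shows that $u^{k-j}\mathbf{x}=0$ forces $\mathbf{x}=0$. Hence
$$\dim u^{k-j}W_j=\dim W_j=\binom nj-\binom n{j-1}$$
for $j\ge1$, and $\dim u^kW_0=1$.

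\emph{Counting.} The dimensions telescope:
$$1+\sum_{j=1}^k\left(\binom nj-\binom n{j-1}\right)=\binom nk .$$
Pairwise orthogonal subspaces always form a direct sum, and this one has full dimension $\binom nk=\dim V_k$, so it equals $V_k$.

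The main point needing care is the bookkeeping in the orthogonality step. The transposition $\langle u\mathbf{a},\mathbf{b}\rangle=\langle\mathbf{a},d\mathbf{b}\rangle$ must be applied between the correct layers, and Lemma \ref{commute} must be used only within its stated range $l\le n-j$. The positivity of the scalars, needed for injectivity, is exactly where the hypothesis $k\le n/2$ is used.
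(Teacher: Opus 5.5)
Your proposal is correct and follows the paper's proof. Like the paper, you use adjointness to turn the inner product into $d^{k-i}u^{k-j}\mathbf{y}$, use Lemma \ref{commute} to reduce $d^{k-j}u^{k-j}\mathbf{y}$ to a scalar multiple of $\mathbf{y}$, note that the extra $d$ kills $\mathbf{y}$, and finish by telescoping dimensions. Your one addition is the explicit check that $u^{k-j}$ is injective on $W_j$, via positivity of the scalars $l(n-2j-l+1)$ when $k\le n/2$; the paper's ``by telescoping'' tacitly assumes this, so it is a worthwhile gap to fill.
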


\begin{proof}
    Let $l<j$, $\mathbf{x}\in W_l$, $\mathbf{y}\in W_j$. Then $\langle u^{k-l}\mathbf{x},u^{k-j}\mathbf{y}\rangle=\langle \mathbf{x},d^{k-l}u^{k-j}\mathbf{y}\rangle$. However, by Lemma \ref{commute} applied $k-j$ times, $d^{k-j}u^{k-j}\mathbf{y}$ is a scalar multiple of $\mathbf{y}$. Calling $d$ one more time annihilates it. Hence, the spaces are orthogonal, and their total dimension is $\binom nk$, by telescoping.
\end{proof}

Recalling the problem at hand, with $n=2k$, let $\mathbf{v}\in V_k$ be the indicator of sets in $\mathcal{A}$. We see that $uu\mathbf{v}=\sum_{|A|=k+2}2m_A\mathbf{e}_A^{(n)}$, because for each $k$-set $C\in\mathcal{A}$ with $C\subset A$, there are two distinct $(k+1)$-sets $B$ sandwiched between them. Hence, 
$$4\sum_{|A|=k+2} m_A^2=\langle u^2\mathbf{v},u^2\mathbf{v}\rangle=\langle \mathbf{v},d^2u^2\mathbf{v}\rangle.$$
Since $\sum_{|A|=k+2}m_A=\binom{k}{2}|\mathcal{A}\cap [2k]^{(k)}|$, we may focus on upper bounding $\langle \mathbf{v},d^2u^2\mathbf{v}\rangle$, or after taking complements, $\langle \mathbf{v},u^2d^2\mathbf{v}\rangle$.

\begin{lemma} \label{spectrum}
    The spectral decomposition of $u^2 d^2$ on $V_k$ is $\bigoplus_{j=0}^k u^{k-j}W_j$. On $W_{k-j}$, the map $u^2d^2$ scales by $(j+2)(j+1)j(j-1)=24\binom{j+2}{4}$.
\end{lemma}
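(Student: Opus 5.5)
The plan is to check the action of $u^2d^2$ directly on each summand of the orthogonal decomposition $V_k=\bigoplus_{j=0}^k u^{k-j}W_j$ from the preceding lemma, with $n=2k$ throughout. Re-indexing, the summands are $u^{j}W_{k-j}$ for $0\le j\le k$. If we show that $u^2d^2$ maps each $u^jW_{k-j}$ into itself as multiplication by the scalar $(j+2)(j+1)j(j-1)$, then, since the summands are mutually orthogonal and span $V_k$, this is exactly the claimed spectral decomposition. The only tool needed is Lemma \ref{commute}; the main care is in substituting the right parameters.

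Fix $\mathbf{x}\in W_{k-j}$ and set $\mathbf{y}=u^j\mathbf{x}\in V_k$. Lemma \ref{commute}, applied with $n=2k$ and with $k-j$ in the role of $j$, gives for $1\le l\le n-(k-j)$
$$du^l\mathbf{x}=l\bigl(2k-2(k-j)-l+1\bigr)u^{l-1}\mathbf{x}=l(2j-l+1)\,u^{l-1}\mathbf{x}.$$
Taking $l=j$ gives $d\mathbf{y}=j(j+1)\,u^{j-1}\mathbf{x}$. Applying the lemma again with $l=j-1$ gives
$$d^2\mathbf{y}=j(j+1)\cdot(j-1)(j+2)\,u^{j-2}\mathbf{x}.$$
Then applying $u^2$ yields
$$u^2d^2\mathbf{y}=(j+2)(j+1)j(j-1)\,\mathbf{y}=24\binom{j+2}{4}\mathbf{y}.$$

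The step needing attention is the range of validity at small $j$. For $j=0$ or $j=1$ we cannot write $u^{j-2}\mathbf{x}$. In these cases we argue directly instead.
\begin{itemize}
\item For $j=0$, $\mathbf{y}=\mathbf{x}\in W_k$ is killed by $d$ by definition.
\item For $j=1$, the first step gives $d\mathbf{y}=1\cdot 2\,\mathbf{x}$, and a second $d$ annihilates it since $\mathbf{x}\in W_{k-1}$.
\end{itemize}
Both cases agree with the formula, which vanishes at $j=0,1$. The edge case $W_0=V_0$ (that is, $j=k$) is covered by the general computation as long as $k\ge 2$, because the constraint $l\le n-(k-j)=k+j$ is always satisfied.

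Finally, we note the consistency of this result with its use in the paper. Complementation sends $V_{k+2}$ to $V_{k-2}$ and conjugates $d$ into $u$ (and $u$ into $d$). Hence $d^2u^2$ and $u^2d^2$ on $V_k$ are unitarily equivalent, and bounding $\langle\mathbf{v},d^2u^2\mathbf{v}\rangle$ reduces to the eigenvalue computation above applied to the complemented indicator.
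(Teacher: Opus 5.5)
Your proof is correct and is the same argument as the paper's: two applications of Lemma \ref{commute} followed by $u^2$. In your indexing these use $l=j$ and then $l=j-1$; in the paper's indexing they are $l=k-j$ and $l=k-j-1$. Your separate treatment of $j=0,1$ is a small gain in rigor over the paper, which silently invokes the lemma with $l=0$, outside its stated range.
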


\begin{proof}
    Let $\mathbf{x}\in W_j$. Using Lemma \ref{commute} with $l=k-j$, we see $n-2j-l=k-j$,
    $$u^2d^2u^{k-j}\mathbf{x}=(k-j)(k-j+1)u^2du^{k-j-1}\mathbf{x},$$
    and then using Lemma \ref{commute} again,
    $$u^2d^2u^{k-j}\mathbf{x}=(k-j-1)(k-j+2)(k-j)(k-j+1)u^{k-j}\mathbf{x}.$$
    Therefore, on $W_j$, the linear map $u^2d^2$ is the scalar in this equation.
\end{proof}

Let $\alpha$ be the proportion of sets in $[2k]^{(k)}$ present in $\mathcal{A}$. Then the projection of $\mathbf{v}$ onto $u^kW_0$ is the vector with all components equal to $\alpha$. Since $W_1$ is generated by vectors $e_{\{a\}}^{(1)}-e_{\{1\}}^{(1)}$ ($1<a\le 2k$), the vectors in $u^{k-1}W_1$ have opposite components at complementary pairs. Thus, we can bound $\langle \mathbf{v},u^2d^2\mathbf{v}\rangle$ depending on $\alpha$.

\section{Completing the Proof}

Let $\mathbf{w}=\mathbf{v}-\alpha\mathbf{1}$ be the balanced vector and $\mathbf{w}'$ its projection $\mathbf{w}$ onto the space generated by vectors $\mathbf{z}_A=\mathbf{e}_A^{(k)}-\mathbf{e}_{\overline{A}}^{(k)}$. Since $\mathbf{z}_A\perp \mathbf{1}$, $\langle \mathbf{w},\mathbf{z}_A\rangle=1_{A\in \mathcal{A}}+1_{\overline{A}\in \mathcal{A}}$ and $\langle \mathbf{w},\mathbf{w}\rangle=(\alpha-\alpha^2)\binom{2k}{k}$. Hence, 
$$\langle \mathbf{w}',\mathbf{w}'\rangle = \sum_{1\in A}\frac{\langle \mathbf{w},\mathbf{z}_A\rangle^2}{\langle \mathbf{z}_A,\mathbf{z}_A\rangle}=\frac12\alpha\binom{2k}{k},$$
and by projecting $\mathbf{w}$ onto $u^{k-1}W_1$ in particular, the square length can but decrease. Therefore, using the spectrum found in Lemma \ref{spectrum},
$$\frac{\langle \mathbf{v},u^2d^2\mathbf{v}\rangle}{\binom{2k}{k}}\le \alpha^2\cdot 24\binom{k+2}{4}+\frac12\alpha\cdot 24\binom{k+1}{4}+\left(\frac12\alpha-\alpha^2\right)\cdot 24\binom k4.$$
Recalling that $\sum m_A=\alpha k(k-1)\binom{2k}{k}$, we have
$$\sum_{|A|=k+2}\left(1-\frac{m_A}{\binom{k+1}{2}}\right)^2= \binom{2k}{k+2}-\frac{2\binom{k}{2}\alpha\binom{2k}{k}}{\binom{k+1}{2}}+\frac1{4\binom{k+1}{2}^2}\langle \mathbf{v},u^2d^2\mathbf{v}\rangle.$$
After algebraic manipulations, we deduce that $\frac1{\binom{2k-1}{k+2}}\sum_{|A|=k+2}\left(1-\frac{m_A}{\binom{k+1}{2}}\right)^2\le $
$$\le \frac{8(k+2)(2k-1)\alpha^2-2(k+2)(k^2+5k-2)\alpha+2k^2(k+1)}{k(k+1)(k-2)}\le 1.$$
On the interval $\alpha\in[0,\frac12]$, this quadratic attains the maximum $1$ exactly at $\alpha=\frac12$.

Therefore, if $\mathcal{A}\subseteq [2k]^{(k)}\cup [2k]^{(k+1)}\cup [2k]^{(k+2)}$, then $|\mathcal{A}|\le \frac12\binom{2k}{k}+\binom{2k}{k+1}+\binom{2k-1}{k+2}$.

For equality to hold, $m_A=0$ for all $A\in [2k]^{(k+2)}$ and every $(k+2)$-set in $\mathcal{A}$ has $k+2$ subsets in $\mathcal{A}$. So a $(k+2)$-set in $\mathcal{A}$ cannot contain a $k$-set in $\mathcal{A}$. Hence, by the Kruskal-Katona theorem \cite{Lov}, if the number of $(k+2)$-sets in $\mathcal{A}$ is at least $\binom{2k-1}{k+2}$, then there are at least $\binom{2k-1}{k}=\frac12\binom{2k}k$ $k$-sets not in $\mathcal{A}$. For equality to hold, up to a permutation of $[2k]$, the $(k+2)$-sets are formed of the first $2k-1$ elements. Thus, only a lightning achieves equality.

\section{Acknowledgements and Declaration}

The author would hereby like to thank Jan Petr for introducing this problem to them. Most of the proof ideas were found using a free one-month trial of ChatGPT 5.6 Plus. Thus, the author is grateful for, if not indebted to, anyone whose work the LLM was trained on so infinitesimally as to deprive them of a due citation. In that regard, we dedicate this research to the advancement of mathematical knowledge, in general, and combinatorics, in particular, that it be a source of happiness for all.

\textsc{Independent author in Szeged, Hungary.} \\ \\
\textit{E-mail address:} \texttt{kkw25@cantab.ac.uk}

\end{document}